\theoremstyle{definition}
\newtheorem{theorem}{Theorem}[section]
\newtheorem{definition}[theorem]{Definition}
\newtheorem{conjecture}[theorem]{Conjecture}
\newtheorem{lemma}[theorem]{Lemma}
\newtheorem{question}[theorem]{Question}
\newtheorem{proposition}[theorem]{Proposition}
\title{Geodesic nets on flat spheres}
\author{Ian Adelstein\textsuperscript{*}, Elijah Fromm, Rajiv Nelakanti, Faren Roth, Supriya Weiss}
\address{Department of Mathematics, Yale University, 10 Hillhouse Ave, New Haven, CT 06511}
\thanks{\noindent\textsuperscript{*}corresponding author ian.adelstein@yale.edu}
\begin{document}

\begin{abstract} We consider geodesic nets (critical points of a length functional on the space of embedded graphs) on doubled polygons (topological $2$-spheres endowed with a flat metric away from finitely many cone singularities). We use the theorem of Gauss-Bonnet to demonstrate the existence and non-existence of specific geodesic nets on regular doubled polygons.
\end{abstract}

\maketitle

\section{Introduction}


The existence of closed geodesics on Riemannian manifolds is a foundational question in the field of differential geometry dating back to early work of Poincar\'e. A fruitful approach has been to realize closed geodesics as critical points of a length functional on the space of loops on a Riemannian manifold. If one instead considers a length functional on the space of embedded graphs, then the critical points form a new class of stationary objects:~geodesic nets.

\begin{definition}
A geodesic net is a graph embedded into a manifold such that each edge is a geodesic and at each vertex the unit tangent vectors to the edges sum to zero. 
\end{definition}

We note that geodesic nets are critical points (not necessarily minima) of a length functional, just like geodesics. The edge condition (that they be geodesics) ensures that geodesic nets are locally shortest paths, and the vertex condition ensures that the edges are balanced; that is, there is no direction in which to perturb the net and decrease its total length. For example, at a vertex of degree 3, there is a $\frac{2\pi}{3}$ angle between any two neighboring edges

\begin{figure}[hb]
\centering
\includegraphics[scale=.4]{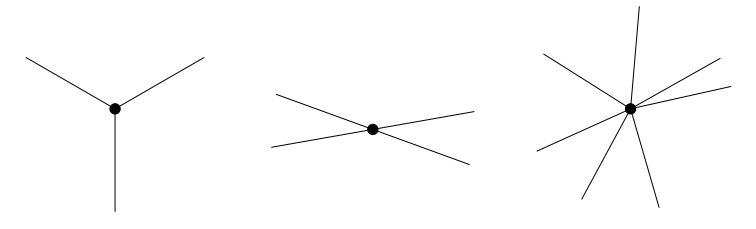}
\caption{Balanced vertices of degree 3, 4, and 7; image credit \cite{NP}.}
\label{fig:balanced}
\end{figure}

As a first (almost trivial) example, a closed geodesic is a geodesic net. 
Any vertex introduced along a closed geodesic 
will satisfy the balancing condition, as the two unit tangent vectors point in opposite directions, hence sum to zero. For this reason we do not consider vertices of degree two when discussing geodesic nets.

A first non-trivial example is a theta-graph, which is a geodesic net modeled on a graph with exactly two vertices connected by exactly three edges. Figure~\ref{fig:theta} illustrates such a theta-graph on a round sphere. Some of the only known existence results for geodesic nets (outside of specific examples) come from Hass and Morgan. Their main result in \cite{HM} demonstrates that every Riemannian 2-sphere with positive curvature admits a geodesic net modeled on one of the graphs from Figure~\ref{fig:types}, and as a corollary they conclude that convex metrics on the 2-sphere sufficiently close to the round metric necessarily admit a theta-graph.

\begin{figure}[ht!]
\centering
\includegraphics[scale=.38]{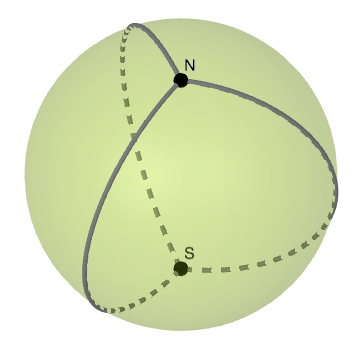}
\caption{A theta-graph on a round sphere; image credit \cite{Ade}.}
\label{fig:theta}
\end{figure}

\begin{figure}[ht!]
\centering
\includegraphics[scale=.54]{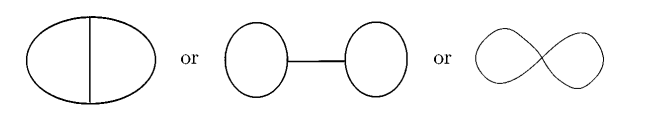}
\caption{A theta-graph, a bifocal, and a figure-eight; image credit \cite{HM}.}
\label{fig:types}
\end{figure}



The graphs in Figure~\ref{fig:types} are of particular interest as they are the only graphs which partition a 2-sphere into three regions (see Lemma~\ref{lem}). We consider geodesic nets modeled on these graphs and demonstrate the existence and nonexistence of such nets on a specific class of 2-spheres:~the doubled regular polygons. A doubled polygon is the metric space obtained by gluing two identical n-gons along their common boundary. As such, a doubled polygon is a topological 2-sphere with a flat metric away from finitely many cone singularities.

We first consider the space of all doubled triangles and find that exactly one triangle admits a theta-graph and exactly one triangle admits a bifocal graph, whereas all isosceles triangles admit figure-eight graphs. We then show that the only regular $n$-gons to admit 3-regular geodesic nets are 3$n$- and 4$n$-gons. We find that only regular 3$n$-gons may admit theta-graphs (and they do), only regular 12$n$-gons may admit bifocal graphs, and all regular odd-gons admit figure-eight graphs. Finally, we study which regular even-gons do and do not admit figure-eight graphs. 

\section{Preliminary Arguments}


Our main tool to investigate the existence of geodesic nets on doubled polygons is the theorem of Gauss-Bonnet, which relates the geometry and topology of a surface. 

\begin{theorem}
Let $M$ be a surface with boundary $\partial M$. Let $K$ be Gaussian curvature, $k_g$ geodesic curvature, and $\chi (M)$ the Euler characteristic. Then
$$\iint_{M} K ~dA + \int_{\partial M} k_g ~ds = 2\pi \chi (M).$$
\end{theorem}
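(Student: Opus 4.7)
The plan is to establish the theorem in three stages: first a local version on small geodesic triangles, then a piecewise-smooth polygonal version, and finally the global formula via a triangulation argument. Because the formula combines an interior curvature integral, a boundary curvature integral, and a topological invariant, the strategy is to isolate each piece through a careful accounting in which interior data cancel in pairs and only the boundary and Euler contributions survive.

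First I would prove the local angle-defect formula for a geodesic triangle $T\subset M$ with interior angles $\alpha_1,\alpha_2,\alpha_3$, namely
\[
\iint_T K\, dA \;=\; (\alpha_1+\alpha_2+\alpha_3)-\pi.
\]
The cleanest derivation is via Stokes' theorem: choose a local orthonormal frame and let $\omega$ be its connection $1$-form; then $d\omega = -K\, dA$, and integrating over $T$ converts the interior integral into a boundary integral of $\omega$. Along each geodesic edge, $\omega$ records the rotation of the frame relative to the parallel edge direction, contributing nothing except at the corners, where one picks up exactly the exterior angles $\pi-\alpha_i$. Rearranging gives the defect formula. A slight generalization to triangles with one curved edge gives
\[
\iint_T K\, dA + \int_{\partial T}k_g\, ds \;=\; (\alpha_1+\alpha_2+\alpha_3)-\pi,
\]
since $k_g$ is precisely the rate at which the tangent rotates relative to a parallel frame.

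Next I would globalize by choosing a triangulation of $M$ that is fine enough for every triangle to lie inside a coordinate chart, with each boundary edge of $M$ broken into pieces lying in single triangles. Applying the local formula to every triangle and summing produces three kinds of terms: the sum $\iint_M K\, dA$; a sum of geodesic curvature integrals over each edge, in which every interior edge is traversed twice with opposite orientations and thus cancels, leaving $\int_{\partial M} k_g\, ds$; and a sum over vertex angles. For each interior vertex the contributing angles sum to $2\pi$, and for each boundary vertex they sum to $\pi$ (after accounting for exterior angles at corners, if any). Collecting these and using $V-E+F=\chi(M)$ produces $2\pi\chi(M)$ on the right-hand side.

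The main obstacle, and the step deserving the most care, is the combinatorial bookkeeping that converts the vertex-angle sum into $2\pi\chi(M)$: one must separate interior vertices, boundary vertices on smooth parts of $\partial M$, and boundary corners, and then verify that the Euler relation $V-E+F=\chi(M)$ assembles these into the claimed topological invariant. Once that identification is made, the theorem follows by straightforward addition. Since the applications in this paper involve piecewise-geodesic regions inside doubled polygons, where $k_g\equiv 0$ on geodesic arcs and the boundary contribution reduces to a finite sum of exterior angles, the polygonal version of the argument above is already sufficient for the sequel.
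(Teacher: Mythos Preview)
Your outline is the standard textbook proof of the Gauss--Bonnet theorem via the local angle-defect formula and a triangulation argument, and it is essentially correct as a sketch. However, the paper does not prove this statement at all: it is quoted as the classical Gauss--Bonnet theorem, without proof, and then immediately specialized to the flat cone-surface setting in Equation~(\ref{gb}). So there is nothing to compare against; the paper treats the result as background.

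If anything, your remark in the final paragraph is the relevant observation for this paper: on a doubled polygon the surface is flat away from the cone points, the edges of a geodesic net have $k_g\equiv 0$, and the boundary contribution reduces to a finite sum of exterior angles, which is exactly how the paper arrives at Equation~(\ref{gb}). A full triangulation proof of Gauss--Bonnet is unnecessary here.
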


We formulate an analogue of this theorem relevant to the study of geodesic nets on doubled polygons. 
Topologically, the doubled $n$-gon is a sphere with total Gaussian curvature $4\pi$. On a round sphere, this curvature is uniformly distributed on the surface. On a doubled regular $n$-gon, this curvature is concentrated in the vertices, with each vertex receiving $ 4\pi /n$ curvature. 
A (connected) geodesic net on a doubled polygon divides the space into a finite number of faces, $F$, each of which is topologically a disk and therefore has $\chi(F)=1$. The edges of a face are geodesic, so that $$\int_{\partial F} k_g ~ds = \sum_i \alpha_i$$ where $\alpha_i$ are the exterior turning angles at the vertices of the face. Since curvature is equally concentrated at each vertex, a face that encloses $x$ vertices of the doubled $n-$gon will have $\frac{4\pi}{n}x$ curvature. For any of these faces $F$ the Gauss-Bonnet formula thus simplifies to 
\begin{equation}\label{gb}
    \frac{4\pi}{n}x + \sum_i \alpha_i = 2\pi .
\end{equation}

In the case of a 3-regular geodesic net we have that the exterior turning angle $\alpha_i = \pi/3$ at every vertex. For a face $F$ that has $y$ edges (and thus $y$ vertices) and which encloses $x$ vertices of the doubled $n$-gon the Gauss-Bonnet formula further simplifies to
\begin{align*}
\frac{4\pi}{n}x+\frac{\pi}{3}y &= 2\pi.
\end{align*}
Solving this expression for $n$ yields
\begin{align}\label{3reg}
n=\frac{12x}{6-y}
\end{align}
and we see that each face of a 3-regular geodesic net restricts the value of $n$ in terms of its $y$ edges and the $x$ vertices it encloses. Note in the degenerate case when $y=6,$ $\frac{4\pi}{n}x = 0,$ so this face encloses no vertices of the polygon and imposes no restriction on $n.$

We use Equation~\ref{3reg} and the fact that $x, y, n$ are all integer-valued to explore the existence and non-existence of 3-regular geodesics nets on doubled regular $n$-gons in the next section.

We conclude the preliminary arguments with the following:

\begin{lemma}\label{lem}
The theta-graph, figure-eight, and bifocal (see Figure~\ref{fig:types}) are the only graphs which partition a 2-sphere into three regions
\end{lemma}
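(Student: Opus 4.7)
The plan is to combine Euler's formula with the standing convention that geodesic nets have no vertices of degree $2$, then enumerate the handful of resulting cases.

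With $F = 3$ faces, Euler's formula $V - E + F = 2$ immediately gives $E = V + 1$. Since every vertex has degree at least $3$, the handshake identity $2E = \sum_v \deg(v) \geq 3V$ combines with $E = V + 1$ to yield $2V + 2 \geq 3V$, whence $V \leq 2$. The case $V = 0$ is degenerate: a single edge ($E = 1$) would have to be a closed curve, which cuts the sphere into only two regions.

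I would then enumerate the two remaining cases. If $V = 1$, then $E = 2$ and the lone vertex has degree $4$, so both edges are loops based at this vertex and the graph is necessarily the figure-eight. If $V = 2$, then $E = 3$ with both vertices of degree exactly $3$; up to homeomorphism, the only multigraph structures compatible with this degree condition and connectedness are three parallel edges between the two vertices (the theta-graph) or one connecting edge together with a loop at each vertex (the bifocal). Any other way of distributing three edges over two vertices either disconnects the graph or creates a vertex of degree $1$ or $2$.

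The main obstacle is the standard passage from an abstract multigraph with the right Euler count to an actual embedding in $S^2$ whose complement has exactly three components; for each of the three candidates this is immediate from the pictures in Figure~\ref{fig:types}, and connectedness guarantees that every complementary face is a topological disk (i.e.\ the embedding is cellular), so no hidden configuration has been missed. One mild subtlety I would flag is that two disjoint closed geodesics also cut $S^2$ into three regions, but the lemma is applied throughout the paper under the convention that the graph is connected, which excludes this case.
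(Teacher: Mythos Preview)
Your argument is correct and follows essentially the same route as the paper: Euler's formula with $F=3$ gives $E=V+1$, the degree-$\geq 3$ convention forces $V\leq 2$, and a short case analysis finishes. You are simply more careful than the paper in treating the $V=0$ case, the cellularity of the embedding, and the connectedness hypothesis, all of which the paper leaves implicit.
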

\begin{proof}
This fact follows from the definition of the Euler characteristic, $V-E+F=\chi$. For a partition with 3 faces on a 2-sphere (where $\chi=2$) this formula reduces to $E=V+1$. As we (definitionally) require each vertex to have degree at least 3, we recover the inequality $3V \leq 2E$ and thus $V \leq 2$.
When $V=1$, there must be two loops (figure-eight); when $V=2$, there must be three edges (theta-graph) or two loops and one edge (bifocal). 
\end{proof}

\section{Geodesic Nets on Doubled Triangles}

We begin by classifying the existence of theta-graphs, figure-eight graphs, and bifocal graphs on irregular doubled triangles. The combinatorics of the graph, together with Gauss-Bonnet (Equation~\ref{gb}), determine the curvature in each of the three faces. In particular, each face has positive curvature and thus must contain exactly one vertex of the triangle. This condition determines the angles that are possible at each vertex. 

\begin{theorem}
There exists an embedded theta-graph on a doubled triangle if and only if the triangle is equilateral.
\end{theorem}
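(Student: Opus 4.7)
The plan is to treat the ``only if'' direction via Gauss--Bonnet applied face-by-face, and to settle the ``if'' direction by an explicit symmetric construction.

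For the ``only if'' direction, I would take an embedded theta-graph on a doubled triangle with interior angles $\theta_1, \theta_2, \theta_3$, whose three cone points carry curvatures $\kappa_i = 2\pi - 2\theta_i \in (0, 2\pi)$ summing to $4\pi$. Each of the two theta-graph vertices is balanced of degree $3$, so consecutive edges meet at angle $2\pi/3$ and each exterior turning angle equals $\pi/3$. Each of the three faces is a bigon, so its two exterior angles sum to $2\pi/3$; the face version of Gauss--Bonnet then forces the total cone-point curvature inside each face to equal $4\pi/3$. Since no face can be curvature-free while supporting $4\pi/3$ of curvature, and since the three cone points must distribute among the three faces, pigeonhole places exactly one cone point in each face. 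Hence each $\kappa_i = 4\pi/3$, each $\theta_i = \pi/3$, and the triangle is equilateral.

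For the ``if'' direction, I would construct an explicit theta-graph on a doubled equilateral triangle from its $3$-fold symmetry. Take the two vertices of the net to be the centroids $O_1, O_2$ of the two triangular copies, and connect them by three geodesic arcs, each passing through the midpoint $M_i$ of one shared side. Unfolding the two copies across side $i$ straightens the path $O_1 M_i O_2$ into a single Euclidean segment, verifying that the arc is a geodesic of the doubled metric. At each centroid the three outgoing directions are mutually $2\pi/3$ apart by the rotational symmetry of the equilateral triangle, so the balancing condition holds, and the three arcs lie in disjoint sectors of each triangular copy away from the centroids, so the resulting graph is embedded.

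The step I expect to be the main obstacle is ruling out degenerate configurations in the ``only if'' direction, namely a cone point lying on an edge of the theta-graph or coinciding with a theta-graph vertex. The pigeonhole step implicitly assumes that every cone point lies in the open interior of a unique face. This is settled by observing that a smooth geodesic cannot pass through a cone point whose cone angle is strictly less than $2\pi$, which is exactly the situation at each vertex of a triangle, so each cone point must lie in the interior of some face and the face-counting argument applies without modification.
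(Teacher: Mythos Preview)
Your proposal is correct and follows essentially the same approach as the paper: Gauss--Bonnet on each bigon face forces $4\pi/3$ curvature per face, hence one cone point of curvature $4\pi/3$ in each, hence equilateral; and the explicit centroid-through-midpoint construction is precisely the theta-graph the paper exhibits in its figure. Your treatment is in fact more careful than the paper's, which does not explicitly address the possibility of a cone point lying on an edge or vertex of the net; your observation that a geodesic cannot pass smoothly through a cone point of angle less than $2\pi$ cleanly disposes of that case.
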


\begin{proof}
Each face of the theta-graph contains a pair of vertices with angle $\frac{2\pi}{3}$ each. Gauss-Bonnet (Equation~\ref{gb}) then implies that each face of the partition contains $2\pi - 2(\frac{\pi}{3}) =  \frac{4\pi}{3}$ curvature. This is only happens when the triangle is equilateral. For existence see Figure~\ref{existence}. 
\end{proof}

\begin{theorem}
There exists a bifocal graph on a doubled triangle if and only if the triangle is a 30-30-120 triangle.
\end{theorem}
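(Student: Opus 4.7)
The plan is to mimic the theta-graph argument from the previous theorem by applying Gauss--Bonnet, in the form of Equation~\ref{gb}, to each of the three faces of a putative bifocal separately.

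First I would describe the combinatorics of the bifocal: two net-vertices $A$ and $B$, each of degree $3$, joined by a single edge $e$ and each carrying a loop $\ell_A$, $\ell_B$. This produces three faces, namely the two loop-disks (each bounded by one loop with a single corner at $A$ or $B$) and the outer disk whose boundary traverses $\ell_A$, $e$, $\ell_B$, $e$ in succession and therefore has four corners (two at $A$ and two at $B$). All corners have interior angle $2\pi/3$ by 3-regularity, so the total exterior turning is $\pi/3$ in each loop face and $4\pi/3$ in the outer face.

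Next I would apply Gauss--Bonnet to each face. Since neither $\pi/3$ nor $4\pi/3$ equals $2\pi$, no face can enclose zero cone singularities, and since the triangle carries only three cone singularities, each of the three faces must enclose exactly one. Writing the triangle-angles enclosed by the two loop faces and the outer face as $\alpha$, $\beta$, $\gamma$---with corresponding cone curvatures $2\pi - 2\alpha$, $2\pi - 2\beta$, $2\pi - 2\gamma$---Equation~\ref{gb} reads
\[
(2\pi - 2\alpha) + \tfrac{\pi}{3} = 2\pi, \quad (2\pi - 2\beta) + \tfrac{\pi}{3} = 2\pi, \quad (2\pi - 2\gamma) + \tfrac{4\pi}{3} = 2\pi,
\]
forcing $\alpha = \beta = \pi/6$ and $\gamma = 2\pi/3$, i.e.\ a 30-30-120 triangle. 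This gives the necessary direction.

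For the converse I would construct the bifocal explicitly on the doubled 30-30-120 triangle, exploiting its mirror symmetry swapping the two $30^{\circ}$ vertices. Placing $A$ and $B$ symmetrically along the symmetry axis forces the connecting edge $e$ to lie along that axis, and by symmetry the two loops arise as reflections of one another, each enclosing one of the $30^{\circ}$ cone points while the $120^{\circ}$ cone point sits in the outer face. The main (and really only) obstacle is to show that the positions of $A, B$ can be tuned so that the loops actually close up as geodesics meeting $e$ at angle $2\pi/3$; I would address this by an intermediate-value argument, noting that as $A, B$ slide along the axis the angle-defect at each net-vertex varies continuously between the two symmetric extremes and must vanish at an intermediate position, at which point the symmetric configuration is a genuine embedded bifocal.
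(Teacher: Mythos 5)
Your necessity argument is correct and is essentially the paper's: apply Equation~\ref{gb} face by face, note each face has positive curvature hence contains exactly one cone point, and solve. Your bookkeeping (turning $\pi/3$ in each loop face, $4\pi/3$ in the outer face, hence enclosed curvatures $5\pi/3$, $5\pi/3$, $2\pi/3$, forcing angles $\pi/6$, $\pi/6$, $2\pi/3$) agrees exactly with the paper's computation, just spelled out in more detail.

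The existence half has a genuine gap, and it is in the symmetry you invoke. If $A$ and $B$ both lie on the axis of the reflection that swaps the two $30^\circ$ vertices, that reflection fixes $A$ and $B$ individually; it therefore carries a loop based at $A$ to a loop based at $A$, so it cannot interchange $\ell_A$ with $\ell_B$ as you assert. Nor can you instead ask that each loop be individually mirror-symmetric, since a single axis-symmetric loop cannot enclose exactly one of the two $30^\circ$ cone points (they are swapped by the reflection). The symmetry that actually does the work on the doubled triangle is the involution combining the front--back swap of the two triangular copies with the reflection exchanging the two $30^\circ$ vertices: it interchanges $A$ (on one copy) with $B$ (on the other) and $\ell_A$ with $\ell_B$, which is consistent with the bifocal's combinatorics. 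Your intermediate-value step is also left unspecified---you do not say what one-parameter family of configurations you are deforming or what angle defect changes sign at the endpoints---so as written the construction does not close up. The paper avoids all of this by simply exhibiting an explicit bifocal in Figure~\ref{isoc}; if you want a symmetry-plus-continuity argument instead, you need to set it up with the correct involution and identify the deformation parameter explicitly.
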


\begin{proof}
By Gauss-Bonnet (Equation~\ref{gb}) the two loops of the bifocal contain $\frac{5\pi}{3}$ curvature each, and the exterior face contains $\frac{2\pi}{3}$ curvature. This only happens for the 30-30-120 triangle. For existence see Figure~\ref{isoc}. 
\end{proof}


\begin{figure}
    \centering
    \begin{minipage}{0.56\textwidth}
        \centering
        \includegraphics[width=1\textwidth]{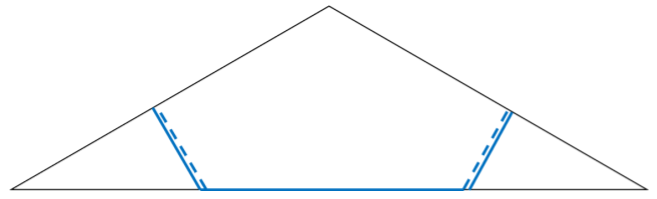} 
    \end{minipage}\hfill
    \begin{minipage}{0.44\textwidth}
        \centering
        \includegraphics[width=0.76\textwidth]{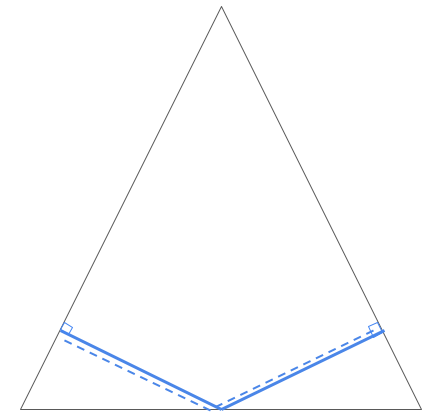} 
    \end{minipage}
    \caption{Bifocal on 30-30-120 and figure-eight on isosceles.}
    \label{isoc}
\end{figure}

\begin{theorem}
There exists a figure-eight graph on a doubled triangle if and only if the triangle is isosceles.
\end{theorem}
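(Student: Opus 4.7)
The plan is to prove both implications via a Gauss--Bonnet analysis of the figure-eight's three faces, analogous to the preceding two theorems. A figure-eight has one vertex $v$ of degree $4$ and two loops, producing two monogon loop-interiors $F_1, F_2$ and a digon exterior $F_3$ with two corners at $v$. Euler's formula rules out the interleaved rotation system at $v$ (it would give a torus, not a sphere), so in the sphere embedding each loop's two edge-ends are cyclically adjacent at $v$, and the four sectors around $v$ alternate as $\beta_1, \gamma_1, \beta_2, \gamma_2$, where $\beta_i$ is the interior angle of $F_i$ and $\gamma_1, \gamma_2$ are the corner angles of $F_3$.

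For the forward direction I would first apply the balancing condition at $v$: the four unit tangent vectors sum to zero, so drawn head-to-tail in cyclic angular order they trace a closed equilateral quadrilateral with all positive consecutive turn angles. Such a polygon is convex, and a convex quadrilateral with equal sides is a rhombus, whose opposite sides are anti-parallel. This pairs the tangent vectors as $(1,3)$ and $(2,4)$ in cyclic order, which forces $\beta_1 = \beta_2 =: \beta$, $\gamma_1 = \gamma_2 =: \gamma$, and $\beta + \gamma = \pi$. Applying Gauss--Bonnet (Equation~\ref{gb}) to each face then yields $K(F_1) = K(F_2) = \pi + \beta$ and $K(F_3) = 2\gamma = 2\pi - 2\beta$.

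Next I would use the strict positivity of these curvatures (for non-degenerate $\beta \in (0, \pi)$) to conclude that each face contains at least one cone singularity, and since there are three singularities and three faces, each contains exactly one. Writing the curvature at triangle vertex $i$ as $2\pi - 2A_i$, the identity $K(F_1) = K(F_2)$ will force the two triangle angles inside the loop-faces to coincide, making the triangle isosceles. For the converse, given an isosceles triangle I would construct a figure-eight symmetric under reflection across the triangle's axis of symmetry: place the central vertex $v$ on the axis and draw two symmetric geodesic loops encircling the two equal-angled vertices, leaving the third vertex in the exterior. The reflection symmetry will automatically enforce $\beta_1 = \beta_2$ and $\gamma_1 = \gamma_2$; tuning the size of the loops via a continuity argument will produce the required relation $\beta + \gamma = \pi$. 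Such a configuration is illustrated in Figure~\ref{isoc}.

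The main obstacle will be justifying the rhombus pairing --- namely, that in cyclic angular order the four tangent vectors at $v$ must pair oppositely as $(1,3)$ and $(2,4)$. Once this symmetric pairing is in hand, the Gauss--Bonnet computation and the distribution of cone singularities across the three faces are routine, and the converse reduces to the symmetric construction described above.
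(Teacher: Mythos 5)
Your proposal follows essentially the same route as the paper: the balancing condition at the degree-$4$ vertex forces the two loops to have equal turning angles, Gauss--Bonnet then forces the two loop faces to enclose equal curvature, and since every face has positive curvature each must contain exactly one cone point, so the triangle is isosceles. Your rhombus argument is a careful justification of the step the paper asserts in one line (that opposite tangent vectors at the vertex pair up), and your curvature bookkeeping $K(F_1)=K(F_2)=\pi+\beta$, $K(F_3)=2\gamma$ is consistent (it sums to $4\pi$). One correction on the existence direction: the relation $\beta+\gamma=\pi$ is not something you need to tune for --- once each geodesic loop encloses exactly one cone point and the apex lies in the exterior face, Gauss--Bonnet gives $\beta=\pi-2A$ and $\gamma=\pi-C$, so $\beta+\gamma=2\pi-(2A+C)=\pi$ automatically; what actually requires an argument is that the two symmetric geodesic loops exist, close up at a common point, and are disjoint away from it. The paper settles this with an explicit recipe rather than a continuity argument: place the self-intersection at the midpoint of the edge between the two equal angles and run the geodesic so that it crosses each of the other two edges perpendicularly (Figure~\ref{isoc}); the reflection symmetry then gives the balancing for free.
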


\begin{proof}
To satisfy the balancing condition of geodesic nets, the two loops of a figure-eight have the same exterior turning angle, hence by Gauss-Bonnet must contain the same amount of curvature. This is only possible if the triangle is isosceles. For existence, one can construct a figure-eight on an isosceles triangle by placing the single point of self-intersection at the midpoint of the edge which sits between the two equal angled vertices. Then shoot the geodesic from this midpoint such that it intersects its next edge perpendicularly, and continue smoothly to mirror this behavior perpendicularly through the final edge (see Figure~\ref{isoc}). 
\end{proof}


\section{3-Regular Graphs on Doubled Regular Polygons}
Next, we consider doubled $n-$gons for $n\geq 3$, restricting our attention to doubled regular $n-$gons. We begin with a general result that restricts 3-regular geodesic nets to the doubled regular $3n$ and $4n$-gons. 

\begin{theorem}
There exists a 3-regular geodesic net on a doubled regular polygon if and only if the polygon is a $3n$ or $4n$-gon.
\end{theorem}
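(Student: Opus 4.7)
My plan is to use Equation~(\ref{3reg}), $n = 12x/(6-y)$, in both directions.

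For the \emph{necessity} direction, I will start from the observation that the doubled $n$-gon has total Gaussian curvature $4\pi > 0$, so any 3-regular geodesic net must have at least one face $F$ enclosing $x \geq 1$ cone points. For this face $y < 6$, leaving the five cases $y \in \{1,2,3,4,5\}$. A direct inspection of Equation~(\ref{3reg}) then yields: for $y = 1$ we need $5 \mid x$, so $n$ is a multiple of $12$; for $y = 2$ we get $n = 3x$, a multiple of $3$; for $y = 3$ we get $n = 4x$, a multiple of $4$; for $y = 4$ we get $n = 6x$, a multiple of $3$; and for $y = 5$ we get $n = 12x$. In every case $n$ is a multiple of $3$ or of $4$, which yields the necessity direction.

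For the \emph{sufficiency} direction, I will exhibit a 3-regular geodesic net on every doubled regular $3m$-gon and every doubled regular $4m$-gon. On the $3m$-gon I use a theta-graph with two 3-valent vertices and three digonal faces, each enclosing $m$ consecutive cone points (so $y = 2$, $x = m$, $n = 3m$); its explicit realization is addressed in the next section of the paper. On the $4m$-gon I use a tetrahedral net with four 3-valent vertices, six edges, and four triangular faces, each enclosing $m$ consecutive cone points (so $y = 3$, $x = m$, $n = 4m$); in both cases I place the vertices on an orbit of a suitable symmetry subgroup of the doubled polygon so that the $2\pi/3$ balancing angle at every vertex is forced by symmetry, and each face contains exactly the required number of cone points.

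The main obstacle I expect is verifying that these symmetric constructions can be realized as honest embedded geodesic nets rather than purely combinatorial objects, especially the tetrahedral net on the $4m$-gon. The anticipated argument is a continuity plus intermediate value theorem argument: parametrize a one-parameter family of symmetric candidate configurations by the distance from a symmetry axis to a vertex of the net, and show that the interior angle at each vertex varies continuously from values larger than $2\pi/3$ to values smaller than $2\pi/3$ along the family, hitting the balanced configuration at some intermediate parameter value.
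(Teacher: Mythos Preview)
Your necessity argument matches the paper's: both note that some face must enclose at least one cone point (since the total curvature is $4\pi>0$), and for that face Equation~(\ref{3reg}) with $1\le y\le 5$ forces $3\mid n$ or $4\mid n$. Your explicit case-split on $y$ simply spells out the paper's one-line observation.

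For sufficiency the paper takes a shorter route than you do. It exhibits, via a figure, a theta-graph on the doubled equilateral triangle and a 3-regular net on the doubled square, with the balancing condition immediate from the symmetry of the picture. The key observation is that both nets cross the polygon boundary only at \emph{midpoints of sides}; one may therefore ``cut off'' the corners of the triangle (resp.\ square)---replacing each cone point by $m$ equally spaced cone points---to produce the doubled regular $3m$-gon (resp.\ $4m$-gon) without disturbing the net at all. This handles every $3m$- and $4m$-gon in one stroke, with no further analysis.

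Your plan to build a tetrahedral net directly on each $4m$-gon and then invoke an intermediate-value argument is a legitimate alternative, but the IVT step is real work that you have only sketched. Note also that the phrase ``forced by symmetry'' is doing too much: the dihedral symmetry group of a general $4m$-gon contains no element of order $3$, so placing the net vertices on a symmetry orbit can at best force the three angles at each vertex into a pattern $\alpha,\alpha,2\pi-2\alpha$, not to equal $2\pi/3$ outright. You still need the IVT to hit $\alpha=2\pi/3$, and you have not verified the endpoint behavior along your one-parameter family. The paper's corner-cutting trick sidesteps this entirely by reducing to the two base cases $n=3,4$. Finally, your deferral of the $3m$-gon theta-graph to ``the next section'' is circular in this paper: that section's existence claim points back to the very figure invoked in the present proof.
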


\begin{proof}
As all faces must have $y>0$ we see from Equation~\ref{3reg} that $n$ is always a multiple of $3$ or $4$. Though the degenerate case $y=6$ recovers no restriction on $n$, any geodesic net must produce at least one face with $x>0$, and this face will yield the restriction on $n$ as above.

For the existence of 3-regular geodesic nets on $3n$ and $4n$-gons we proceed via construction. We demonstrate a 3-regular geodesic net on the doubled triangle and square in Figure~\ref{existence}; by symmetry, it is clear that these are geodesic nets. As the edges of the nets traverse the midpoints of the polygons, we can ``cut off'' the corners of the polygons without disrupting the geodesic net; we conclude that such a net exists on any $3n$ or $4n$-gon, respectively. 
\end{proof}

\begin{figure}
    \centering
    \begin{minipage}{0.5\textwidth}
        \centering
        \includegraphics[width=0.8\textwidth]{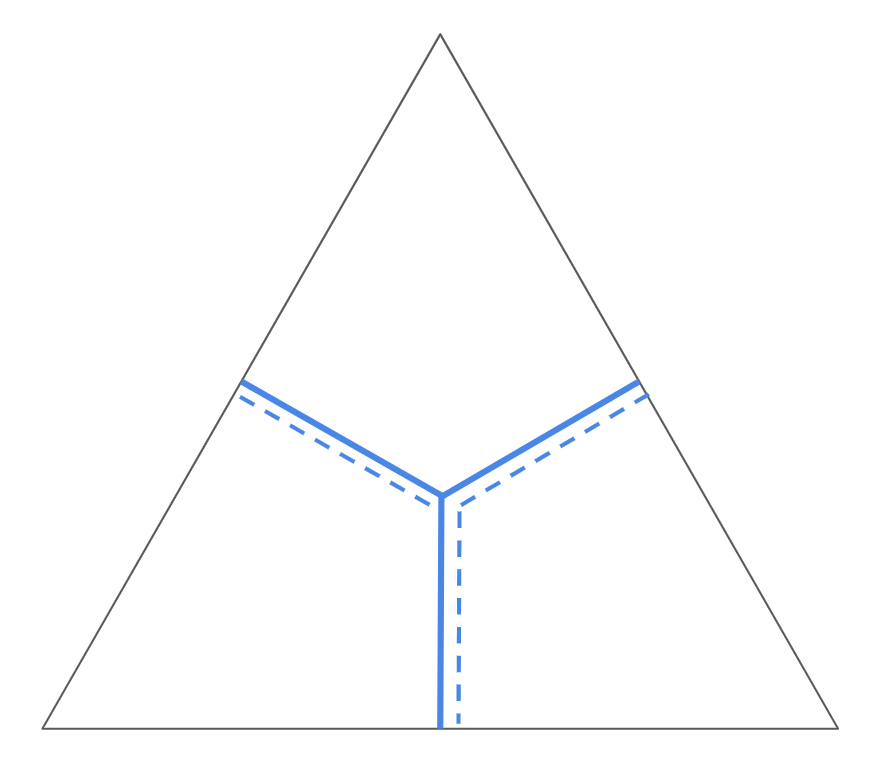} 
    \end{minipage}\hfill
    \begin{minipage}{0.5\textwidth}
        \centering
        \includegraphics[width=0.66\textwidth]{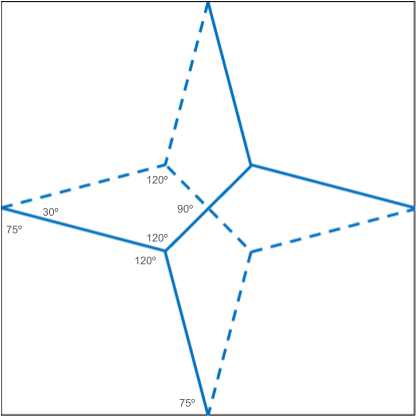} 
    \end{minipage}
    \caption{3-regular geodesic nets on the doubled triangle and square.}
    \label{existence}
\end{figure}



Next we consider the existence of some specific 3-regular graphs:~the theta-graph and the bifocal graph.

\begin{theorem}
There exists an embedded theta-graph on a doubled regular polygon if and only if the polygon is a $3n$-gon.
\end{theorem}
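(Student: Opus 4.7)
The plan is to follow the same two-step template used in the earlier theorems: first use Gauss-Bonnet (Equation~\ref{gb}) to obtain a divisibility constraint that forces $n$ to be a multiple of $3$, and then exhibit an explicit symmetric construction on any doubled regular $3n$-gon.

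For the forward (necessity) direction, I would unpack the combinatorics of a theta-graph. It has two vertices and three edges, so each of its three faces is bounded by exactly $y=2$ edges and passes through both vertices. Since a theta-graph is $3$-regular, every interior angle is $2\pi/3$, so every exterior turning angle is $\pi/3$, giving total turning $2 \cdot \pi/3$ per face. Plugging into Equation~\ref{gb} for a polygon with $N$ sides,
\begin{equation*}
\frac{4\pi}{N}x + \frac{2\pi}{3} = 2\pi,
\end{equation*}
which simplifies to $x = N/3$. Since $x$ must be a positive integer, $N$ must be divisible by $3$, i.e. the polygon is a $3n$-gon.

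For the reverse (existence) direction, I would construct a symmetric theta-graph on any doubled regular $3n$-gon. Place the two vertices of the theta-graph at the two centers (one per copy of the polygon), both of which are smooth points with cone angle $2\pi$. Pick $3$ edges of the $3n$-gon equally spaced around the boundary (say edges $0$, $n$, $2n$), and for each, take the straight segment from the first center through the midpoint of that edge; unfolding across the shared boundary shows this segment continues as a straight line to the corresponding midpoint on the second copy and then on to the second center, hence is a geodesic on the doubled polygon. By the $3$-fold rotational symmetry, the three resulting geodesic arcs meet each center with unit tangent vectors spaced at angles $2\pi/3$, so the balancing condition is satisfied. The arcs are pairwise disjoint away from the two centers (again by the rotational symmetry), so the resulting graph is an embedded theta-graph. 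One can verify that each of its three faces encloses exactly $n$ vertices of the $3n$-gon, matching the $x = N/3$ predicted by Gauss-Bonnet.

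The main obstacle is the existence half: one must be careful that the candidate arcs are genuine geodesics of the doubled polygon (not just of a single flat copy) and that they meet the balancing condition at both endpoints simultaneously. Both facts follow cleanly from the unfolding trick together with the $3$-fold symmetry of the $3n$-gon, so no calculation beyond bookkeeping is required; the necessity half is purely an Equation~\ref{gb} computation and is immediate.
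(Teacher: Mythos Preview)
Your proposal is correct. The necessity direction is exactly the paper's argument: each face of a theta-graph has $y=2$, so Equation~\ref{3reg} (equivalently your direct use of Equation~\ref{gb}) gives $n=3x$.

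For existence, your argument differs slightly from the paper's. The paper exhibits a theta-graph on the doubled equilateral triangle (Figure~\ref{existence}) and then invokes the ``cutting off the corners'' trick from the preceding theorem to transport it to any $3n$-gon. You instead give a direct, self-contained construction on an arbitrary regular $3n$-gon, placing the two vertices at the centers of the two copies and running three geodesic arcs through equally spaced edge midpoints. Specialized to the triangle, your construction coincides with the paper's figure, so the two approaches produce the same net; yours simply bypasses the corner-cutting step by verifying the balancing condition directly via the $3$-fold rotational symmetry of the $3n$-gon. Either route is entirely adequate: the paper's is more unified with the surrounding section, while yours is more self-contained and makes the role of the divisibility condition in the construction explicit.
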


\begin{proof}
As all faces in a theta-graph have $y=2$ we see from Equation~\ref{3reg} that $n=3x$. The existence result follows as before from Figure~\ref{existence}.
\end{proof}





\begin{theorem}
If there exists an embedded bifocal graph on a doubled regular polygon, then the polygon is a $12n$-gon.
\end{theorem}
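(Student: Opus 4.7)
My plan is to extract a divisibility constraint on $n$ by applying Equation~\ref{3reg} to a single face of the bifocal. A bifocal is a 3-regular geodesic net (each of its two vertices has degree $3$: two loop-ends plus the connecting edge), so Equation~\ref{3reg} applies. The bifocal partitions the doubled $n$-gon into three faces: two \emph{loop faces} (each bounded by a single loop) and one \emph{exterior face}.

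First I would read off the combinatorial data of a loop face: walking along its boundary, one traces a single edge (the loop), passing through exactly one vertex (the loop's basepoint) once. Hence $y=1$, and Equation~\ref{3reg} reduces to $n = 12x/5$ for some positive integer $x$ (the number of cone points of the doubled $n$-gon enclosed by the loop). Since $\gcd(5,12)=1$, this forces $5 \mid x$ and therefore $12 \mid n$, giving the desired conclusion.

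As a consistency check, I would also compute $y$ for the exterior face: the two loops contribute one boundary-side each (their outsides) and the connecting edge contributes both sides, so $y=4$, and at each of the two vertices the exterior face uses two of the three $2\pi/3$-sectors around that vertex. Substituting into Equation~\ref{3reg} gives $n = 6x_{\text{ext}}$, consistent with $n = 12k$ (setting the three face-values $x = 5k, 5k, 2k$, which sum to $n$, as they should by the total curvature).

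The main obstacle is the combinatorial bookkeeping at the loop face. It is tempting to over-count edges or vertex-visits at a loop, since a loop appears to ``touch'' its basepoint at two places, but as an edge of the embedded graph the loop contributes a single edge and a single vertex-visit to the boundary of its interior face. Once $y = 1$ is correctly identified for a loop face, the divisibility $12 \mid n$ follows immediately from Equation~\ref{3reg} and the coprimality $\gcd(5,12) = 1$.
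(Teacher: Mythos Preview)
Your argument is correct and follows exactly the paper's approach: apply Equation~\ref{3reg} to a loop face (with $y=1$) to obtain $n=12x/5$, from which the divisibility $12\mid n$ follows. Your explicit $\gcd$ reasoning and the exterior-face consistency check ($y=4$, giving $n=6x_{\text{ext}}$) are nice additions that the paper leaves implicit, but the core idea is the same.
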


\begin{proof}
Consider the face defined by one of the loops of the bifocal. This face has $y=1$ and Equation~\ref{3reg} thus yields $n=12x/5$. 
\end{proof}

Note that we do not have an existence result for the bifocal graph on the $12n$-gon. If such a graph exists we have by Equation~\ref{3reg} that each loop face encloses $x=5n/12$ vertices and that the third face encloses $x=2n/12$ vertices. Even with this restriction we have been unable to construct a bifocal on the $12$-gon, and invite the reader to consider the following:

\begin{question}
Does the doubled regular $12$-gon admit a geodesic net modeled on a bifocal graph?
\end{question}







\section{Figure-Eight Graphs on Doubled Regular Polygons}

Next we consider the existence of figure-eight graphs on the doubled regular polygons. The figure-eight graphs have a single vertex of degree 4, hence the methods of the previous section (and in particular Equation~\ref{3reg}) do not apply.

First we construct a geodesic net modeled on a figure-eight graph for any doubled regular odd-gon. Depicted in Figure~\ref{odd}, this net is a self-intersecting closed geodesic which leaves the midpoint of an edge, traverses a far edge perpendicularly, returns to the original midpoint (but not smoothly), and then mirrors this behavior before closing smoothly. Note that this construction does not work on even-gons because geodesics crossing a side perpendicularly will close up after repeating this behavior on the opposite (and parallel) side.

\begin{figure}[ht!]
\centering
\includegraphics[scale=.54]{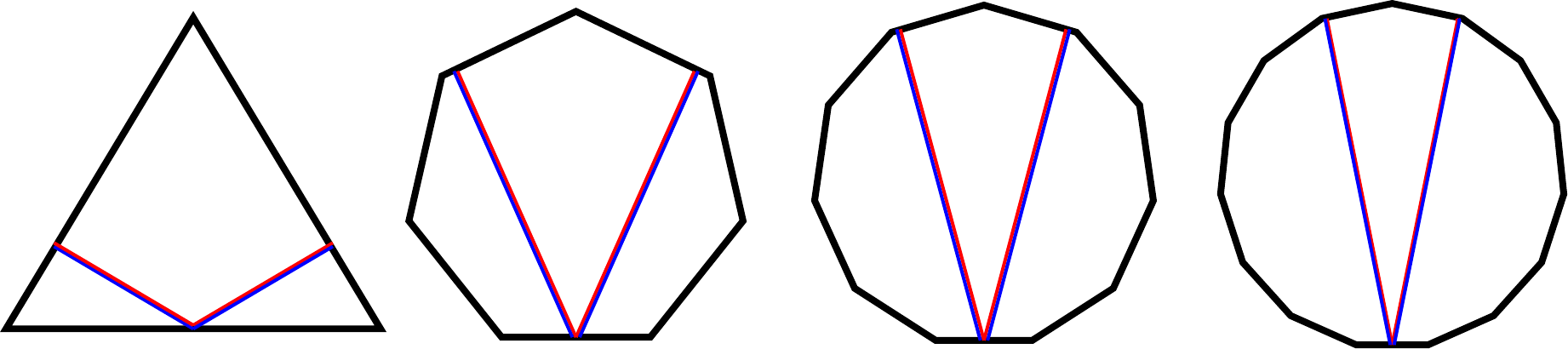}
\caption{Figure-eights on doubled regular odd-gons; image credit \cite{SUMRY18}.}
\label{odd}
\end{figure}

In order to address the existence of figure-eights on the doubled regular even-gons we again employ the version of Gauss-Bonnet from Equation~\ref{gb}. Considering one of the loop faces (where $y=1$), we produce the following restriction on the exterior turning angle: 
\begin{equation}\label{even}
\alpha = 2\pi - \frac{4\pi}{n}x.
\end{equation}

When $n=4$ this restriction (together with the fact that turning angles must satisfy $0<\alpha<\pi$) allows us to show the non-existence of figure-eight graphs: 

\begin{proposition}
The doubled regular 4-gon does not admit figure-eight graphs. 
\end{proposition}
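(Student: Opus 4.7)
The plan is to specialize Equation~\ref{even} to $n=4$ and deduce an integer-valued impossibility. Suppose for contradiction that an embedded figure-eight exists on the doubled regular $4$-gon, and let $L$ be one of its two loop faces. Since $L$ has $y=1$ with a single corner at the unique degree-$4$ vertex $v$ of the figure-eight, Equation~\ref{even} with $n=4$ yields
\[
\alpha \;=\; 2\pi - \pi x,
\]
where $\alpha$ is the exterior turning angle of $L$ at $v$ and $x \in \mathbb{Z}_{\ge 0}$ is the number of cone points enclosed by $L$.

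Next I would justify the range $0 < \alpha < \pi$ cited just before the proposition. This follows from the balancing condition at the degree-$4$ vertex $v$: four unit tangent vectors summing to zero and listed in cyclic order must pair into antipodal opposites, so the four sector angles at $v$ take the form $\theta,\pi-\theta,\theta,\pi-\theta$ for some $\theta \in (0,\pi)$. In particular the interior angle of $L$ at $v$ lies strictly in $(0,\pi)$, and hence so does its exterior turning angle $\alpha$.

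Combining the two constraints, $0 < 2\pi - \pi x < \pi$ rearranges to $1 < x < 2$, which has no non-negative integer solution. This contradiction rules out the existence of the loop face $L$, and hence of any figure-eight graph on the doubled regular $4$-gon.

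The main obstacle is making the antipodal-pairing claim for four balanced unit vectors genuinely rigorous; this can be handled by a direct calculation with complex exponentials, writing $w_j = e^{i\phi_j}$ and using $|\cos((\phi_3-\phi_1)/2)| = |\cos((\phi_4-\phi_2)/2)|$ together with the cyclic-order constraint to force $\phi_3 = \phi_1+\pi$ and $\phi_4 = \phi_2 + \pi$. Once the range for $\alpha$ is secured, the rest of the argument is a brief integer-range check, and the appeal to Equation~\ref{even} is immediate.
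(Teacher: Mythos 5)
Your proposal is correct and follows essentially the same route as the paper: specialize Equation~\ref{even} to $n=4$ to get $\alpha = 2\pi - \pi x$, then observe that no integer $x$ gives a non-degenerate turning angle in $(0,\pi)$. The only difference is that you supply a justification (antipodal pairing of four balanced unit vectors) for the constraint $0<\alpha<\pi$, which the paper simply asserts in the sentence preceding the proposition; that extra care is sound but not a different argument.
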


\begin{proof}
When $n=4$ we have from Equation~\ref{even} that $\alpha = 2\pi - \pi x$ which means that the turning angle must be $0, \pi$, or $2\pi$. These are all degenerate turning angles and we conclude that 4-gons do not admit figure-eights. 
\end{proof}

When $n=6$ the same analysis as above yields that the turning angle in the loop face must be $\alpha=2\pi/3$. To our great surprise we were able to produce such a figure-eight graph on the doubled regular 6-gon.

\begin{proposition}
The doubled regular 6-gon admits a figure-eight graph. 
\end{proposition}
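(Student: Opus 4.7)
The plan is to exhibit an explicit figure-eight geodesic net on the doubled regular hexagon, guided by Gauss-Bonnet.

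First, I pin down the required local geometry. Substituting $n=6$ into Equation~\ref{even}, the exterior turning angle of a loop face enclosing $x$ cone points is $\alpha = 2\pi - (2\pi/3)x$. Since both loops of the figure-eight share the same value of $\alpha$ (by the symmetry of the 4-valent balanced vertex) and the three faces partition the six cone points, the only non-degenerate choice is $x=2$, $\alpha = 2\pi/3$. Each loop face then has interior angle $\pi/3$ at the degree-4 vertex. Applying Equation~\ref{gb} to the outer face (which encloses the remaining two cone points) forces its two turning angles at the vertex to sum to $2\pi/3$, i.e.\ $\pi/3$ each by symmetry. The resulting sector angles $\pi/3, 2\pi/3, \pi/3, 2\pi/3$ sum to $2\pi$, and the corresponding outgoing unit tangents (at cyclic angles $0, \pi/3, \pi, 4\pi/3$) sum to zero, so the balancing condition is automatically consistent.

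Next I realize the figure-eight as a single smooth self-intersecting closed geodesic whose tangent lines meet at the crossing at angle $\pi/3$. At any transverse self-intersection of a smooth closed geodesic the four outgoing tangents are $\pm v_1, \pm v_2$ and hence sum to zero, so such a curve automatically defines a balanced geodesic net, and I only need to produce a closed geodesic with the correct self-intersection angle and cone-point distribution.

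The construction itself uses the unfolding technique: reflect the hexagon across each boundary edge the geodesic crosses so that the geodesic becomes a straight line in the plane. I exploit the involutive symmetry of the doubled hexagon that swaps pairs of opposite cone points. Starting from a point with nontrivial stabilizer in this symmetry group (for example, the midpoint of an edge, or a point on a reflection axis) and choosing the direction so the unfolded straight line returns to its initial position and direction after a finite symmetric sequence of reflections, I obtain a closed symmetric geodesic. Among the finitely many symmetric directions I then select one that yields a single self-intersection at angle $\pi/3$ with the required cone-point partition, and display the resulting figure-eight analogously to Figure~\ref{odd}.

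The main obstacle is identifying the precise line in the unfolded picture: ensuring exactly one self-intersection at the required angle, and that each of the three complementary regions contains exactly two cone points. This is a finite case analysis once the symmetry is imposed; once such a geodesic is produced, Equation~\ref{gb} applied to each of its three faces confirms the required turning-angle and curvature data, completing the existence proof.
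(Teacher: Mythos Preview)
Your Gauss--Bonnet analysis is correct and matches the paper's preamble: on the doubled hexagon the only admissible loop turning angle is $\alpha=2\pi/3$, and a smooth self-intersecting closed geodesic automatically satisfies the balancing condition at its crossing. But none of this is the proof; it only tells you what the figure-eight \emph{must} look like if it exists.

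The genuine gap is that you never actually produce the curve. Your write-up says ``among the finitely many symmetric directions I then select one that yields a single self-intersection at angle $\pi/3$ with the required cone-point partition, and display the resulting figure-eight.'' That sentence is the entire content of the proposition, and it is asserted rather than carried out. You have not specified a starting point, a direction, an unfolding sequence, or verified that the resulting geodesic closes with exactly one self-intersection and the correct $(2,2,2)$ partition of cone points. (Nor is it clear that the relevant ``symmetric directions'' form a finite set on a flat cone surface; closed geodesics on doubled polygons come in infinitely many directions.) Calling it ``a finite case analysis once the symmetry is imposed'' does not substitute for doing that analysis.

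The paper's proof is exactly the step you omit: it simply exhibits the explicit net (Figure~\ref{six6}) and observes that symmetry verifies both the geodesic and balancing conditions. All of your preliminary reasoning is sound motivation for \emph{searching} for that picture, but the proof is the picture itself. To complete your argument you need to write down the actual closed geodesic---e.g., give its crossing point and the sequence of edges it traverses on the two hexagonal faces---and check directly that it has one self-intersection at angle $\pi/3$ enclosing two cone points in each loop.
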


\begin{proof}
The proof is by construction:~see Figure~\ref{six6}. Symmetry makes it clear that this is indeed a geodesic net.
\end{proof}

\begin{figure}[ht!]
\centering
\includegraphics[scale=.3]{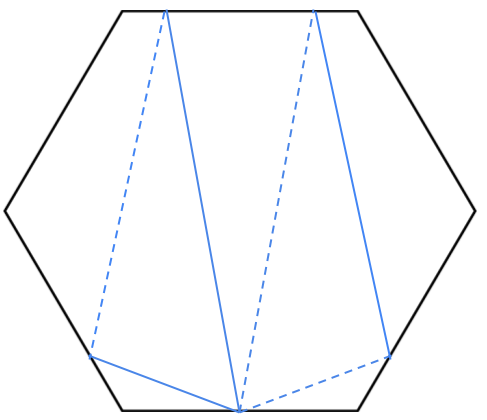}
\caption{Figure-eight on doubled regular 6-gon.}
\label{six6}
\end{figure}

Using Equation~\ref{even} in this way becomes less tenable when $n$ gets large, as the number of possible turning angles increases. Indeed, we again invite the reader to consider the following:

\begin{question}\label{q1}
Do the doubled regular even-gons with $n \geq 8$ admit geodesics nets based on a figure-eight graph?
\end{question}

Note that the technique of ``cutting off'' the corners used in Theorem 3.1 does not extend the doubled 6-gon result to doubled $6n$-gons since the figure-eight on the 6-gon intersects the polygon away from the midpoints of edges. We made some progress on this question, and even convinced ourselves of the non-existence of figure-eights on the doubled regular 8-gons, but were unable to generalize the result. We provide some notes here for the reader, along with a sketch of the 8-gon non-existence argument.

First note in regard to Question~\ref{q1} that it is sufficient to show the non-existence of figure-eights whose unique point of self-intersection lies at the midpoint of an edge. The single degree four vertex of a figure-eight must have opposite angles equal to satisfy the balancing condition of geodesic nets, and thus a figure-eight loop can be viewed as a single closed geodesic with exactly one point of self-intersection. We may restrict our attention to figure-eights whose unique point of self-intersection lies at the midpoint of an edge because any closed geodesic is in a family of homotopic closed geodesics whose union is a \textit{cylinder}, an open, connected strip bounded by parallel geodesic segments with endpoints at singularities (see explanation in Section $1$ of \cite{Vo}). Each cylinder on a doubled polygon contains the midpoint of an edge (see explanation in Section $2$ of \cite{ASZ}, who cite Veech \cite{V}), and thus each family of closed geodesics contains one that passes through the midpoint of an edge. Any figure-eight containing a midpoint of an edge must have its unique point of self-intersection at that midpoint by symmetry. As the number of self-intersections of a closed geodesic is preserved within its cylinder, any figure-eight lives in a cylinder that contains a figure-eight whose unique self-intersection lies at the midpoint of an edge. We conclude that it suffices to show the non-existence of such curves.

For the doubled regular 8-gons (where Gauss-Bonnet tells us that the only non-degenerate turning angle is $\alpha=\pi/2$) we were able to combine the above analysis with results from Section 4 of \cite{SUMRY18} to produce a finite list of possible figure-eight curves. We then used a computer program to sketch the development from each of these finite possibilities, and concluded that none were realizable. As this process did not feel entirely rigorous, we decided to include this non-existence result here in these notes, as opposed to a formal statement with a proof.

\section*{Acknowledgements and Declarations}
The authors would like to thank the Summer Undergraduate Mathematics Research at Yale (SUMRY) REU program that hosted and funded this research group during the summer of 2020. The authors have no additional funding to declare. All authors contributed equally to all parts of this manuscript. The authors have no conflicts of interest to declare. There is no data relevant to this project to share.

\end{document}